\newtheorem{theorem}{Theorem}
\newtheorem{lemma}[theorem]{Lemma}
\newtheorem{proposition}[theorem]{Proposition}
\newtheorem{corollary}[theorem]{Corollary}
\theoremstyle{definition}
\theoremstyle{remark}
\newtheorem{remark}[theorem]{Remark}
\newcommand{\bbP}{{\mathbb P}}
\renewcommand{\P}{{\mathbb P}}
\renewcommand{\O}{\operatorname{O}}
\newcommand{\Hyp}{\operatorname{Hyp}}
\newcommand{\DHyp}{\operatorname{SDHyp}}
\newcommand{\diag}{\operatorname{diag}}
\newcommand{\tr}{{\rm tr}}
\newcommand{\Sym}{{\rm Symm}}
\newcommand{\Mat}{{\rm Mat}}
\newcommand{\GL}{{\rm GL}}
\begin{document}

\title[Symmetric determinantal hypersurfaces]{On the number of symmetric presentations of a determinantal hypersurface}
\author[Matthew Brassil and Zinovy Reichstein]{Matthew Brassil 
and Zinovy Reichstein}

\address
{Department of Mathematics \\
University of British Columbia \\
Vancouver
\\
CANADA}

\email{mbrassil@math.ubc.ca, reichst@math.ubc.ca}

\thanks{ Matthew Brassil was partially supported by a Graduate Research Fellowship from the University of British Columbia.
Zinovy Reichstein was partially supported by
National Sciences and Engineering Research Council of
Canada Discovery grant 253424-2017. }

\subjclass[2000]{14M12, 15A22, 15A86}


\keywords{Determinantal hypersurfaces, symmetric matrices, matrix pencils}
  
\begin{abstract} A hypersurface $H$ in $\bbP^r$ of degree $n$ is called determinantal if it is the zero locus of 
a polynomial of the form $\det(x_0 A_0 + \ldots + x_r A_r)$ for some $(r+1)$-tuple of $n \times n$ matrices $A = (A_0, \ldots, A_r)$.
We will refer to $A$ as a presentation of $H$. Another presentation $B = (B_0, B_1, \ldots, B_r)$ of $H$ can be obtained 
by choosing $g_1, g_2 \in \GL_n$ and setting $B_i = g_1 A_i g_2$ for every $i = 0, 1, \ldots, r$. 
In this case $A$ and $B$ are called equivalent. The second author and
A. Vistoli have shown that for $r \geqslant 3$ a general determinantal hypersurface 
admits only finitely many presentations up to equivalence. In this paper we prove a similar result for symmetric presentations for every
$r \geqslant 2$. Here the matrices $A_0, \ldots, A_r$ are required to be symmetric, and two $(r+1)$-tuples of $n \times n$ symmetric 
matrices $A = (A_0, A_1, \ldots, A_r)$ and $B = (B_0, B_1, \ldots, B_r)$ are
considered equivalent if there exists a $g \in \GL_n$ such that $B_i = g^t A_i g$ for every $i = 0, \ldots, r$. 
\end{abstract}

\maketitle

\section{Introduction}
Throughout this paper $k$ will be an algebraically closed field. We will denote the spaces of $n \times n$ matrices and symmetric $n \times n$ matrices
over $k$ by $\Mat_n$ and $\Sym_n$, respectively.
Given an $(r+1)$-tuple $A = (A_0, A_1, \ldots, A_r) \in \Mat_n^{r+1}$, we define the generalized characteristic polynomial of $A$ to be 
\[ P_A(x_0, \ldots, x_r)  = \det(x_0 A_0 + x_1 A_1 + \ldots + x_r A_r) \]
and the determinantal hypersurface $H_A$ associated to $A$ to be the hypersurface
\begin{equation} \label{e.presentation}
P_A(x_0, \ldots, x_r)  = 0 
\end{equation}
in $\bbP^r$. We will refer to the $(r+1)$-tuple $A$ as a ``presentation" of this determinantal hypersurface.
We will say that $B = (B_0, \ldots, B_r) \in \Mat_n^{r+1}$ is equivalent to $A$ if there exist
non-singular $n \times n$ matrices $g_1$ and $g_2$ such that $B_i = g_1 A_i g_2$ for every $i = 0, 1, \ldots, r$. Clearly
$H_A = H_B$ if $A$ and $B$ are equivalent but the converse is not true in general. Our staring point is the following theorem from~\cite{rv}.

\begin{theorem} \label{thm.rv} Assume $r \geqslant 3$. For
 $A \in (\Mat_n)^{r + 1}$ in general position the determinantal hypersurface $H_A$ has
only finitely many presentations up to equivalence.
\end{theorem}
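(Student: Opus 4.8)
The plan is to read the statement as a generic-finiteness assertion for a quotient map and to reduce it to an infinitesimal computation. Let $G = \GL_n \times \GL_n$ act on $X = \Mat_n^{r+1}$ by $(g_1,g_2)\cdot A = (g_1 A_0 g_2, \ldots, g_1 A_r g_2)$, let $W$ be the space of forms of degree $n$ in $x_0,\ldots,x_r$, and consider $\tilde\pi \colon X \to W$, $A \mapsto P_A$. Since $P_{(g_1,g_2)\cdot A} = \det(g_1)\det(g_2)\,P_A$, the induced map $\pi \colon X \to \P(W)$ is constant on $G$-orbits, and two presentations are equivalent exactly when they lie in one $G$-orbit. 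Thus $H_A$ has finitely many presentations up to equivalence precisely when the fibre $\pi^{-1}(\pi(A))$ is a finite union of $G$-orbits. First I would record that for $A$ in general position the stabilizer in $G$ is the one-dimensional torus $\{(\lambda I,\lambda^{-1}I)\}$: taking $A_0$ invertible, $g_1 A_i g_2 = A_i$ forces $g_2 = A_0^{-1}g_1^{-1}A_0$ and then $g_1$ commutes with each $A_iA_0^{-1}$; for generic $A$ (here $r\geq 2$ suffices) these generate $\Mat_n$, so $g_1$ is scalar. Hence the generic orbit has dimension $2n^2-1$, and the theorem follows once the generic fibre of $\pi$ also has dimension $2n^2-1$, i.e. once $\dim\overline{\im\pi} = (r-1)n^2+1$.

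Because the orbit is always contained in the fibre, we automatically have $\dim\overline{\im\pi}\leq (r-1)n^2+1$, so everything comes down to the reverse inequality, which I would establish infinitesimally. A direct computation with $M=\sum_i x_i A_i$ gives, for a tangent vector $E=(E_0,\ldots,E_r)$,
\[ d\tilde\pi_A(E) = \tr\bigl(\operatorname{adj}(M)\,N\bigr), \qquad N := \sum_i x_i E_i, \]
which over the function field $k(x_0,\ldots,x_r)$ equals $\det(M)\,\tr(M^{-1}N)$. The orbit directions $E_i = XA_i + A_i Y$ correspond to $N = XM + MY$ and satisfy $\tr(M^{-1}N) = \tr(X)+\tr(Y)\in k$, confirming $T_A(G\cdot A)\subseteq \ker d\pi_A$. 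Thus $\ker d\pi_A$ is the space of pencils $N=\sum_i x_i E_i$ with $\tr(M^{-1}N)$ constant, and the crux of the argument is to show that for $A$ in general position the only such pencils are $N = XM+MY$ with $X,Y\in\Mat_n$, equivalently that $\dim\ker d\pi_A = 2n^2-1$.

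This is the step I expect to be the main obstacle, and it is also where the hypothesis $r\geq 3$ enters: for $r=2$ the space $W$ is too small to carry a differential of maximal rank once $n$ is large, so $\ker d\pi_A$ is forced to be strictly larger than the orbit tangent space and the theorem fails. To prove the claim for $r\geq 3$ I would use upper semicontinuity of $A\mapsto\dim\ker d\pi_A$: since $\dim\ker d\pi_A\geq 2n^2-1$ holds everywhere, it suffices to exhibit one tuple $A$ with $\dim\ker d\pi_A = 2n^2-1$. I would take a convenient normal form (say $A_0=I$ together with companion-type or generic diagonalizable $A_1,\ldots,A_r$) and solve $\tr(M^{-1}N)\in k$ explicitly, verifying that the solution space collapses onto $\{XM+MY\}$ exactly when $r\geq 3$. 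Granting this, $d\pi_A$ has maximal rank $(r-1)n^2+1$ at a generic point, so $\dim\overline{\im\pi}=(r-1)n^2+1$ and the generic fibre of $\pi$ has dimension equal to the generic orbit dimension $2n^2-1$. Passing to the rational quotient $X\dashrightarrow X/\!\!/G$, the induced map to $\overline{\im\pi}$ is a dominant map of varieties of equal dimension, hence generically finite, which yields finitely many orbits in the generic fibre and thus finitely many presentations of a general $H_A$ up to equivalence. The last point to treat carefully is that the generic fibre contains no positive-dimensional family of non-generic, larger-stabilizer orbits; this I would settle by a dimension count on the locus of tuples with non-minimal stabilizer.
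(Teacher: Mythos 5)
First, a point of context: the paper you were given does not prove Theorem~\ref{thm.rv} at all --- it imports it from~\cite{rv}, and only proves the symmetric analogue (Theorem~\ref{thm.main}). Moreover, the method the paper uses for the symmetric case --- settle $r=2$ via the Dixon/Beauville surjectivity theorem, then reduce $r \geqslant 3$ to $r=2$ by setting variables to zero --- cannot be transplanted to Theorem~\ref{thm.rv}, because in the non-symmetric setting the $r=2$ statement is \emph{false} (as the paper notes, citing~\cite[Remark (5)]{rv}); any proof must confront $r=3$ head-on. Your framework does set this up correctly: the identification of equivalence classes with orbits of $G = \GL_n \times \GL_n$, the computation that the generic stabilizer is $\{(\lambda I, \lambda^{-1}I)\}$ (via the double centralizer/Burnside theorem), the formula $d\tilde\pi_A(E) = \tr\bigl(\operatorname{adj}(M)N\bigr)$, the inclusion of the orbit tangent space in $\ker d\pi_A$, the automatic bound $\dim \overline{\im \pi} \leqslant (r-1)n^2+1$, and the diagnosis of the $r=2$ failure are all correct. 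But the proposal has a fatal gap exactly at what you yourself call the crux: the claim that for $r \geqslant 3$ there is one tuple $A$ such that the only pencils $N$ with $\tr(M^{-1}N) \in k$ are those of the form $XM+MY$. You do not prove this; you say you would ``take a convenient normal form \dots and solve \dots explicitly, verifying that the solution space collapses.'' That verification \emph{is} the theorem: it is equivalent to the dimension count for the locus of determinantal hypersurfaces which is the main result of~\cite{rv}, and nothing about it is routine. Indeed its truth depends delicately on $r$: for $r=2$ the space of pencils has dimension $3n^2$ while the target forms span at most $\binom{n+2}{2}$ dimensions, so $\dim\ker d\pi_A \geqslant 3n^2 - \binom{n+2}{2} + 1 > 2n^2-1$ whenever $n \geqslant 3$ (the excess being the genus $\frac{(n-1)(n-2)}{2}$, i.e.\ the Jacobian of the curve), and any purported ``explicit'' verification for $r \geqslant 3$ must explain why no analogous excess survives. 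Reducing the theorem to this infinitesimal statement is progress in form only; the mathematical content is untouched.

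There is also a secondary gap, fixable but mishandled as stated: even granting $\dim\overline{\im\pi} = (r-1)n^2+1$, the conclusion ``finitely many presentations up to equivalence'' requires the \emph{entire} fiber $\pi^{-1}(\pi(A))$ to be a finite union of orbits, and a fiber of dimension $2n^2-1$ could a priori contain a positive-dimensional family of smaller orbits through points with larger stabilizers. Your proposed remedy --- ``a dimension count on the locus of tuples with non-minimal stabilizer'' --- does not obviously work, since you have no control on the dimension of the intersection of that locus with the fiber. The clean argument avoids dimension counting altogether: for $A$ in general position $P_A$ is irreducible; any presentation $B$ of $H_A$ then has $\det(B_0) \neq 0$ and the matrices $B_iB_0^{-1}$ admit no common invariant subspace (otherwise $P_B$, hence $P_A$, would factor), so by Burnside's theorem they generate all of $\Mat_n$ and \emph{every} point of the fiber has the minimal one-dimensional stabilizer. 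Then every orbit in the fiber has dimension $2n^2-1$, so each irreducible component of the fiber is a single orbit and finiteness follows. Incorporating this fix still leaves the first gap, which is the real obstruction: as it stands, the proposal is a correct reduction of Theorem~\ref{thm.rv} to an unproven claim that carries all of its difficulty.
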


The purpose of this note is to prove an analogous assertion
for symmetric determinantal hypersurfaces. A symmetric determinantal hypersurfaces of degree $n$ in $\bbP^r$ is a hypersurface
$H = H_A$ where $A = (A_0, \ldots A_r)$ is an $(r +1)$-tuple of $n \times n$ symmetric matrices. In this case we will refer to
$A$ as a symmetric presentation of $H$.  The definition of equivalence is modified as follows.
We say that $A = (A_0, A_1, \ldots, A_r)$ and $B = (B_0, B_1, \ldots, B_r) \in \Sym_n^r$ are equivalent 
if there exists a non-singular $n \times n$-matrix $g$ such that
$B_i = g^t A_i g$ for every $i = 0, 1, \ldots, r$. Here $g^t$ denotes the transpose of $g$. This is the only type of equivalence 
we will use in the sequel.
Our main result is as follows.

\begin{theorem}\label{thm.main}
Assume $r \geqslant 2$ and the base field $k$ is of characteristic $\neq 2$. Then for
$A \in (\Sym_n)^{r + 1}$ in general position, the determinantal hypersurface $H_A$ has
only finitely many symmetric presentations up to equivalence.
\end{theorem}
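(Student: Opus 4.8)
The plan is to reduce Theorem~\ref{thm.main} to a dimension count involving group actions, paralleling the strategy that underlies Theorem~\ref{thm.rv} but adapted to the symmetric/congruence setting. The key observation is that the map sending a symmetric presentation $A = (A_0, \ldots, A_r) \in (\Sym_n)^{r+1}$ to its generalized characteristic polynomial $P_A \in k[x_0, \ldots, x_r]_n$ is a morphism of algebraic varieties, and that two presentations have the same hypersurface precisely when their $P_A$'s agree up to scalar. Writing $\Phi \colon (\Sym_n)^{r+1} \dashrightarrow \P(k[x_0, \ldots, x_r]_n)$ for the induced rational map to the projective space of degree-$n$ forms, the statement ``$H_A$ has only finitely many symmetric presentations up to equivalence'' amounts to saying that the general fiber of $\Phi$ is a finite union of $\GL_n$-orbits, where $\GL_n$ acts by congruence $g \cdot A_i = g^t A_i g$.

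First I would make the group action precise. The group $\GL_n$ acts on the source $(\Sym_n)^{r+1}$ by simultaneous congruence, and this action preserves each fiber of $\Phi$ because $\det(g^t M g) = \det(g)^2 \det(M)$ only rescales $P_A$ by the constant $\det(g)^2$, hence does not move the point in $\P(k[x_0, \ldots, x_r]_n)$. So $\Phi$ factors through the quotient, and the general fiber is a union of orbits. To show that a general fiber is a \emph{finite} union of orbits, it suffices to show that (i) the general orbit has dimension equal to $\dim \GL_n$ minus the dimension of the generic stabilizer, and (ii) the general fiber of $\Phi$ has dimension exactly equal to this orbit dimension. If the fiber and the orbit have the same dimension, and the fiber is a union of orbits all of this dimension, then there are only finitely many such orbits (since the fiber has finitely many irreducible components). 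I would therefore compute $\dim \Phi$ as the dimension of the image, via $\dim (\Sym_n)^{r+1} - \dim(\text{general fiber})$, and separately bound the orbit dimension.

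The two dimension inputs are the heart of the argument. On one side, $\dim (\Sym_n)^{r+1} = (r+1)\binom{n+1}{2}$, and I expect the generic stabilizer of the $\GL_n$-action on $(\Sym_n)^{r+1}$ to be finite (indeed, for $r \geqslant 2$ a generic pencil of symmetric forms should have trivial or at worst finite simultaneous-congruence automorphism group), so that the general orbit has dimension $n^2$. On the other side I must show that the image of $\Phi$ has dimension exactly $(r+1)\binom{n+1}{2} - n^2$, equivalently that a general fiber has dimension exactly $n^2$. The inclusion $\dim(\text{fiber}) \geqslant n^2$ is automatic from the orbit being contained in the fiber; the reverse inequality $\dim(\text{fiber}) \leqslant n^2$ is the substantive claim and is equivalent to computing $\dim \im(\Phi)$. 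I anticipate this computation is where the characteristic $\neq 2$ hypothesis and the bound $r \geqslant 2$ enter, and that it may be carried out either by exhibiting the differential of $\Phi$ at a generic point and showing it has the expected rank, or by a degeneration/specialization to a well-understood diagonal presentation.

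The main obstacle I expect is establishing that the general fiber of $\Phi$ has dimension no larger than $n^2$, i.e.\ that the orbits exhaust the fiber up to finitely many of them rather than sweeping out a positive-dimensional family of distinct orbits. Concretely this is a transversality/generic-smoothness statement about $\Phi$, and the cleanest route is probably to verify it at a single explicit presentation where $P_A$ factors nicely (for instance a diagonal $A$ giving $P_A = \prod_i \ell_i$ for generic linear forms $\ell_i$, or a presentation adapted to a smooth hypersurface) and then invoke upper-semicontinuity of fiber dimension to propagate the bound to the general point. A secondary subtlety is confirming the generic stabilizer is finite, which I would check by a tangent-space computation: the Lie-algebra stabilizer of a generic tuple under the congruence action, namely $\{X \in \Mat_n : X^t A_i + A_i X = 0 \ \forall i\}$, should vanish for $r \geqslant 2$ in characteristic $\neq 2$, since the equations $X^t A_i + A_i X = 0$ for two or more generic symmetric $A_i$ over-determine $X$.
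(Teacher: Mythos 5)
Your reduction of the theorem to a dimension count is sound and in fact matches the paper's own reduction (Proposition \ref{prop.prel}): finiteness of symmetric presentations up to equivalence is equivalent to $\dim \DHyp_{r,n} = (r+1)\frac{n(n+1)}{2} - n^2 = (r-1)\frac{n(n+1)}{2} + n$. The genuine gap is that you never prove this dimension statement, and both methods you propose for it fail. Your preferred certificate --- compute the differential of $\Phi$ at a diagonal tuple $A$, where $P_A = \prod_m \ell_m$ for linear forms $\ell_m$, and propagate by semicontinuity --- provably cannot work for $n \geqslant 3$: at such a point $d\Phi_A(E) = \sum_m \bigl(\prod_{j \neq m} \ell_j\bigr) e_m$, where $e_m(x) = \sum_i x_i (E_i)_{mm}$ depends only on the diagonal entries of the $E_i$, so every off-diagonal perturbation lies in the kernel, and one computes $\dim \ker d\Phi_A = (r+1)\frac{n(n-1)}{2} + (n-1)$, which exceeds $n^2$ whenever $n \geqslant 3$ and $r \geqslant 2$. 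Since rank is lower semicontinuous, a special point where the rank is too small certifies nothing about the generic rank; you would need a point where the rank is at least the expected value, and producing one is exactly the hard part. Note also that in characteristic $p$ the differential strategy is delicate in principle: the map could be inseparable onto its image, making the generic differential drop rank even when fibers are finite, which is why the paper argues via the fiber dimension theorem rather than via generic smoothness.

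What the paper does at this crux is different in kind. For $r = 2$ the needed dimension statement says precisely that $\DHyp_{2,n}$ is dense in $\Hyp_{2,n}$ (indeed $3\frac{n(n+1)}{2} - n^2 = \binom{n+2}{2}-1$), i.e., that a general plane curve of degree $n$ admits a symmetric determinantal representation; this is a nontrivial classical theorem of Dixon \cite{acdixon} (see also \cite{beauville, psv}), not something that falls out of a tangent-space computation at an obvious point. For $r \geqslant 3$ the image is no longer dense, and the paper avoids computing its dimension directly by a slicing trick: if $B = (I, B_1, \ldots, B_r)$ is a reduced presentation of $H_A$, then setting $x_j = 0$ for $j \notin \{0,1,i\}$ shows that $(I, B_1, B_i)$ is a reduced presentation of $H_{(A_0,A_1,A_i)}$, so the $r=2$ case applied to each triple $(A_0, A_1, A_i)$ leaves only finitely many choices for each $B_i$. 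Your outline contains no substitute for either ingredient. A secondary, fixable gap: to conclude ``fiber of dimension $n^2$ implies finitely many orbits'' you need \emph{every} orbit in the general fiber to have dimension $n^2$, i.e., every presentation $B$ of a general $H_A$ --- not merely a generic tuple $B$ --- must have finite stabilizer. The paper secures this through Lemma \ref{lem.prel0} (every presentation of $H_A$ with $A \in U_{r,n}$ again lies in $U_{r,n}$) combined with the normal form of Lemma \ref{lem.red}; your generic-stabilizer tangent computation does not cover the possibly degenerate presentations that could a priori occur in the fiber.
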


%
%
Note that Theorem~\ref{thm.main} cannot be deduced from Theorem~\ref{thm.rv}. Indeed, Theorem~\ref{thm.rv} concerns $(r+1)$-tuples 
$A = (A_0, \ldots, A_r)$ of $n \times n$ matrices in general position. This means that there exists a non-empty (and thus dense) Zariski open subset 
$U \subset \Mat_{n \times n}^{r + 1}$ such that Theorem~\ref{thm.rv} holds for every $A \in U$. We do not know what this open subset $U$ is; in particular,
it is possible that every $(r+1)$-tuple of symmetric matrices may be in the complement of $U$. For this reason we cannot extract any information
about presentations of a general symmetric determinantal variety $H_A$ from Theorem~\ref{thm.rv}. We also remark that Theorem~\ref{thm.rv} fails for $r = 2$ 
 (see~\cite[Remark (5), p. 615]{rv}), while Theorem~\ref{thm.main} is valid for every $r \geqslant 2$.

The remainder of this paper is structured as follows.
Sections~\ref{sect2}-\ref{sect4}
will be devoted to proving Theorem~\ref{thm.main}. In Section~\ref{sect5} we will study the number of inequivalent symmetric presentations
of a general symmetric determinantal hypersurface. This number is unknown in general; we will show that it is $1$ if $r$ is sufficiently large.

\section{Reduced tuples of symmetric matrices}
\label{sect2}

Let $U_{r, n}$ be the subset of $\Sym_n^{r+1}$ consisting of $(r+1)$-tuples $A = (A_0, \ldots, A_r)$ such that 

\smallskip
(i) $A_0$ is non-singular and 

\smallskip
(ii) the polynomial $f(t) = \det(t A_0 - A_1) = 0$ has $n$ has distinct roots. 

\smallskip \noindent
It is easy to see that $U_{r, n}$ is a non-empty (and hence, dense) Zariski open subset of $\Sym_n^{r+1}$. 

\begin{lemma} \label{lem.prel0}
(a) Suppose $H_A = H_B$ for some $A = (A_0, A_1, \ldots, A_r)$ and $B = (B_0, B_1, \ldots, B_r) \in \Sym_n^{r+1}$. Then $A$ lies in $U_{r, n}$ if and 
only if $B$ lies in $U_{r, n}$.

(b) Suppose $A$ and $B \in \Sym_n^{r+1}$ are equivalent. Then $A$ lies in $U_{r, n}$ if and only if $B$ lies in $U_{r, n}$.
\end{lemma}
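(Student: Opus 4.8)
The plan is to reformulate membership in $U_{r,n}$ as a pair of conditions that depend only on the underlying set $H_A \subset \bbP^r$, and then read off both parts of the lemma. Set $p = [1 : 0 : \cdots : 0]$ and let $L \subset \bbP^r$ be the line $\{[x_0 : x_1 : 0 : \cdots : 0]\}$ spanned by the first two coordinate points. Restricting $P_A$ to $L$ gives the binary form $F(x_0, x_1) = P_A(x_0, x_1, 0, \ldots, 0) = \det(x_0 A_0 + x_1 A_1)$, and in the affine chart $x_1 = -1$ this specializes to $f(t) = \det(t A_0 - A_1)$. The coefficient of $x_0^n$ in $F$ is $\det A_0 = P_A(p)$, so $p \in H_A$ if and only if $\det A_0 = 0$.

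First I would prove the characterization
$$ A \in U_{r,n} \iff p \notin H_A \ \text{ and } \ H_A \cap L \text{ consists of } n \text{ distinct points}. $$
For the forward direction, condition (i) gives $\det A_0 \neq 0$, hence $p \notin H_A$ and $\deg F = n$; condition (ii) says $f$ has $n$ distinct roots, so $F$ has $n$ distinct roots in $\bbP^1$, none equal to $p = [1:0]$, and these are precisely the $n$ points of $H_A \cap L = \{F = 0\}$. Conversely, $p \notin H_A$ forces $\det A_0 \neq 0$, which is (i) and makes $F$ a binary form of degree exactly $n$; then $|H_A \cap L| = |\{F = 0\}| = n$ forces $F$, and hence $f$, to have $n$ distinct roots, which is (ii). The only point requiring care is matching the $n$ distinct affine roots of $f$ with the $n$ distinct points of $H_A \cap L$, using (i) to rule out a root ``at infinity'' at $p$.

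Part (a) is then immediate: the right-hand side of the characterization refers only to the set $H_A$, the fixed point $p$, and the fixed line $L$, so it is unchanged when $H_A$ is replaced by the equal set $H_B$; thus $A \in U_{r,n}$ if and only if $B \in U_{r,n}$. For part (b) I would argue directly from the identities $\det B_0 = (\det g)^2 \det A_0$ and $\det(t B_0 - B_1) = \det\!\big(g^t (t A_0 - A_1) g\big) = (\det g)^2 \det(t A_0 - A_1)$, valid because $B_i = g^t A_i g$. Since $\det g \neq 0$, the first identity shows that (i) holds for $A$ if and only if it holds for $B$, while the second shows that $\det(tB_0 - B_1) = (\det g)^2 f(t)$ has the same roots as $f$, so (ii) holds for $A$ if and only if it holds for $B$. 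Alternatively, equivalence yields $P_B = (\det g)^2 P_A$, hence $H_A = H_B$, so (b) also follows from (a). I expect no serious obstacle here; the entire content is the translation between the algebra of the pencil $(A_0, A_1)$ and the geometry of $H_A \cap L$, the one genuine subtlety being to confirm that the conditions defining $U_{r,n}$ are intrinsic to the set $H_A$ rather than to the particular polynomial $P_A$.
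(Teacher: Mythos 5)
Your proof is correct, and it takes a genuinely different route from the paper's, even though the underlying computations coincide. The paper argues at the level of defining polynomials: from $H_A = H_B$ it asserts the existence of a nonzero scalar $c$ with $P_A = c P_B$, then specializes to $(x_0, \ldots, x_r) = (1,0,\ldots,0)$ and $(t,-1,0,\ldots,0)$ to obtain $\det(A_0) = c\det(B_0)$ and $\det(tA_0 - A_1) = c\det(tB_0 - B_1)$, from which (a) follows; (b) is then deduced from (a) exactly as in your closing alternative. You instead make the two conditions defining $U_{r,n}$ intrinsic to the point set $H_A$ (via $p \notin H_A$ and $\lvert H_A \cap L\rvert = n$), so that (a) becomes a tautology. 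This buys something real: the paper's step $P_A = cP_B$ is automatic only if one reads $H_A = H_B$ as equality in $\Hyp_{r,n} \simeq \bbP^{\binom{r+n}{n}-1}$, i.e., as proportionality of degree-$n$ forms; under the naive set-theoretic reading, two forms with the same zero locus need not be proportional (they share irreducible factors but possibly with different multiplicities), so the paper's proof implicitly relies on the divisor-level interpretation. Your version of (a) is valid under the weaker set-theoretic hypothesis, hence is slightly more general and arguably more careful. Your bookkeeping at the point $[1:0]$ (ruling out a root at infinity using $\det A_0 \neq 0$, so that the projective zeros of $F$ biject with the affine roots of $f$) is exactly the detail needed to make the characterization airtight, and your direct computation for (b), $\det(tB_0 - B_1) = (\det g)^2 \det(tA_0 - A_1)$, is a fine substitute for the paper's deduction of (b) from (a).
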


\begin{proof} (a) By our assumption there exists a $0 \neq c \in k$ such that
\begin{equation} \label{e.c}
\det(x_0 A_0 + x_1 A_1 + \ldots + x_r A_r) = c \det(x_0 B_0 + x_1 B_1 + \ldots + x_r B_r) 
\end{equation}
as polynomials in $x_0, \ldots, x_r$. Setting $x_0 = 1$ and $x_1 = \ldots = x_r = 0$, we see that
then 
\[
\det(A_0) = c \det(B_0). 
\]
Similarly, setting $x_0 = t$, $x_1 = -1$ and $x_2 = \ldots = x_r = 0$, we  
conclude that
\[
\det(tA_0 - A_1) = c \det(t B_0 - B_1). 
\] 
Thus $\det(A_0) \neq 0$ if and only if $\det(B_0) \neq 0$ and $\det(tA_0 - A_1)$ has distinct roots if and only if $\det(tB_0 - B_1)$ has
distinct roots. This proves part (a). Now part (b) readily follows, because if $A$ and $B$ are equivalent, then $H_A = H_B$.
\end{proof}

We will say that an $(r+1)$-tuple $(A_0, A_1, \ldots, A_r)$ is reduced if $A_0 = I$ is the $n \times n$ identity matrix and $A_1$ is a diagonal matrix with
distinct eigenvalues. Clearly every reduced $(r+1)$-tuple lies in $U_{r, n}$.
%

Let $\mu_2^n \subset \GL_n$ be the subgroup of diagonal $n \times n$ matrices 
with $\pm 1$ on the diagonal, $\operatorname{S}_n \subset \GL_n$ be the subgroup 
of $n \times n$ permutation matrices and
$H_n \simeq \mu_2^n \rtimes \operatorname{S}_n$ be the subgroup of $\GL_n$
generated by $\mu_2^n$ and $\operatorname{S}_n$. It is easy to see that if $A$ is reduced, then so is
$h^t A h$ for any $h \in H_n$.

\begin{lemma} \label{lem.red} 
Let $A = (A_0, \ldots, A_r) \in U_{r, n}$. Then

\smallskip
(a) the equivalence class of $A$ contains a reduced $(r+1)$-tuple.

\smallskip
(b) Reduced $(r+1)$-tuples in $\Sym_n^{r+1}$ equivalent to $A$ are transitively permuted by $H_n$. 
In particular, there are only finitely many of them.   
\end{lemma}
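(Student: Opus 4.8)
The plan is to handle the two parts separately: for (a) I would build a reduced representative in two stages, and for (b) I would show that any matrix carrying one reduced tuple to another is forced to be a signed permutation matrix.

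For part (a), I would first use the hypothesis (i) that $A_0$ is symmetric and non-singular. Since $k$ is algebraically closed of characteristic $\neq 2$, every non-degenerate symmetric bilinear form is equivalent to the standard one, so there is a $g_1 \in \GL_n$ with $g_1^t A_0 g_1 = I$: concretely, diagonalize $A_0$ by congruence and then rescale the (nonzero) diagonal entries to $1$ using square roots in $k$. Replacing $A$ by $g_1^t A g_1$, I may assume $A_0 = I$, while $A_1$ has become a symmetric matrix whose characteristic polynomial $\det(tI - A_1) = \det(g_1)^2 \det(tA_0 - A_1)$ still has $n$ distinct roots by hypothesis (ii). It then remains to diagonalize $A_1$ by an element $Q$ of the orthogonal group $\Orth_n = \{g : g^t g = I\}$, since such a $Q$ fixes $A_0 = I$ and $g = g_1 Q$ will make $g^t A g$ reduced.

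The step that requires care, and the one I expect to be the main obstacle, is this orthogonal diagonalization: over an algebraically closed field a symmetric matrix need not be orthogonally diagonalizable, since isotropic eigenvectors obstruct the analogue of the real spectral theorem. Here the distinct-eigenvalue hypothesis rescues the argument. Taking eigenvectors $v_1, \dots, v_n$ of $A_1$ for the distinct eigenvalues $\lambda_1, \dots, \lambda_n$, the symmetry of $A_1$ gives $(\lambda_i - \lambda_j)\, v_i^t v_j = 0$, hence $v_i^t v_j = 0$ for $i \neq j$. As the $v_i$ form a basis, the Gram matrix $P^t P$ (with $P = [v_1 \mid \cdots \mid v_n]$) is non-singular and diagonal, which forces $v_i^t v_i \neq 0$ for every $i$; rescaling each $v_i$ by a square root of $1/(v_i^t v_i)$ yields an orthonormal eigenbasis and thus $Q \in \Orth_n$ with $Q^t A_1 Q = \diag(\lambda_1, \dots, \lambda_n)$. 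This completes (a).

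For part (b), I would take two reduced tuples $C$ and $C'$ equivalent to $A$, say $C' = g^t C g$. Comparing the zeroth entries gives $g^t g = I$, so $g \in \Orth_n$, and comparing the first entries gives $g^{-1} D g = D'$, where $D = \diag(d_1, \dots, d_n)$ and $D' = \diag(d_1', \dots, d_n')$ have distinct diagonal entries. Because $D$ has distinct eigenvalues, its eigenlines are the coordinate axes, so $g$ must carry each $e_i$ to a scalar multiple of some $e_{\pi(i)}$; that is, $g$ is a monomial matrix. The relation $g^t g = I$ then forces each nonzero entry to square to $1$, so $g \in H_n$. Conversely, $h^t C h$ is reduced for every $h \in H_n$, as already noted in the text. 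Hence the reduced tuples in the equivalence class of $A$ form a single orbit under $H_n$, and since $H_n$ is finite (of order $2^n n!$) there are only finitely many of them, which is the assertion of (b).
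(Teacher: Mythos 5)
Your proof is correct and takes essentially the same approach as the paper: reduce $A_0$ to $I$ by congruence, then orthogonally diagonalize $A_1$ using the distinct-eigenvalue hypothesis, and for (b) show that any $g$ carrying one reduced tuple to another is an orthogonal monomial matrix, hence lies in $H_n$. If anything, you are slightly more careful than the paper at one point: you justify why each eigenvector $v_i$ satisfies $v_i^t v_i \neq 0$ (via the non-singular diagonal Gram matrix $P^t P$), a genuine subtlety over algebraically closed fields that the paper's appeal to an orthogonal $h$ taking $\mathbf{e}_i$ to a multiple of $v_i$ leaves implicit, while in (b) the paper phrases your monomial-matrix computation in terms of the normalizer $N = T \rtimes \operatorname{S}_n$ of the diagonal torus and the identity $N \cap \operatorname{O}_n = H_n$ --- the same argument in different language.
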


\begin{proof} (a) Recall that $n \times n$ symmetric matrices $X$ are in a natural bijective correspondence 
with symmetric bilinear forms $b \colon k^n \times k^n \to k$. The symmetric bilinear form corresponding to $X$ is
 \[ b({\bf v}, {\bf w}) = {\bf v}^t X {\bf w}, \]
and the symmetric matrix corresponding to $b$ is the Gram matrix
\[ X = \begin{pmatrix} 
b({\bf e}_1, {\bf e}_1) & \hdots & b({\bf e}_1, {\bf e}_n) \\
\vdots & \hdots & \vdots \\
b({\bf e}_n, {\bf e}_1) & \hdots & b({\bf e}_n, {\bf e}_n) 
\end{pmatrix} . \]
Here ${\bf v}$ and ${\bf w}$ denote column vectors in $k^n$ and
\[ {\bf e}_1 = \begin{pmatrix} \; 1 \; \\ \;  0 \;  \\ \vdots \\ \; 0 \; \end{pmatrix} , \quad \ldots \quad , \, {\bf e}_n =
\begin{pmatrix} \; 0 \; \\ \vdots \\ \; 0 \;  \\ \; 1 \; \end{pmatrix}  \]
is the standard basis of $k^n$.
Under this correspondence, non-degenerate symmetric bilinear forms correspond to non-singular matrices and the $\GL_n$-action 
on symmetric bilinear forms by $(g \cdot b)({\bf v}, {\bf w}) = b(g({\bf v}), g({\bf w}))$  corresponds to the $\GL_n$-action on symmetric 
matrices by $g \cdot X = g^t X g$. Since $k$ is an algebraically closed field of characteristic $\neq 2$, every non-degenerate symmetric bilinear
form on $k^n$ is equivalent to 
\[ \Bigl\langle \; \begin{pmatrix} x_1 \\ \vdots \\ x_n \end{pmatrix} , \begin{pmatrix} y_1 \\ \vdots \\ y_n \end{pmatrix} \; \Bigr\rangle = x_1 y_1 + \ldots + x_n y_n. \]
In other words, there exists a $g \in \GL_n$ such that $g^t A_0 g = I$. By Lemma~\ref{lem.prel0}, since $A$ lies in $U_{r, n}$, so does $g^t A g$. 
Thus after replacing $A$ by $g^t A g$, we may assume that $A_0 = I$.

By the definition of $U_{r, n}$, $A_1$ has distinct eigenvalues $\lambda_1, \ldots, \lambda_n$. Since $A_1$ is symmetric, 
the respective eigenvectors ${\bf v}_1, \ldots, {\bf v}_n$ are mutually orthogonal relative to 
$\langle \; \; , \; \; \rangle$. Hence, there exists an orthogonal matrix $h \in \O_n$ which takes each standard basis vector ${\bf e}_i$ to a scalar
multiple of ${\bf v}_i$. 
Setting $B = (B_0, B_1, \ldots, B_r) = h^t A h$ and noting that by our choice of $h$,
$B_0 = h^t I h = I$ and $B_1 = h^t A_1 h = h^{-1} A_1 h = \diag(\lambda_1, \ldots, \lambda_n)$, we conclude that $B$ is reduced, as desired.  

(b) Suppose $B = (I, B_1, \ldots, B_r)$ and $B' = (I, B_1', \ldots, B_r')$ are reduced $(r+1)$-tuples in $\Sym_n^{r+1}$, both equivalent to $A$.
Then they are equivalent to each other. In other words, there exists a $g \in \GL_n$ such that $g^t B g = B'$. In particular,
(i) $g^t I g = I$ and (ii) $g^t B_1 g = B_1'$, where $B_1$ and $B_1'$ are both diagonal matrices with distinct eigenvalues. 

(i) tells us that $g$ is an orthogonal matrix, i.e., $g^t = g^{-1}$. Now (ii) is equivalent to saying that $g$ lies in the normalizer $N$ 
of the diagonal maximal torus $T$ in $\GL_n$. It is well known that $N = T \rtimes \operatorname{S}_n$. Since $\operatorname{S}_n$
lies in $\O_n$ and $T \cap \O_n = \mu_2^n$, we conclude that $N \cap \O_n = H_n$, and part (b) follows.
\end{proof}
%

\section{The locus of symmetric determinantal hypersurfaces}
\label{sect3}

Let $\Hyp_{r, n} \simeq \bbP^{\binom{r + n}{n} - 1}$ 
denote the space of degree $n$ hypersurfaces in $\bbP^r$ and 
consider the rational map 
\begin{equation} \label{e.P} \phi_{r, n} \colon \Sym_n^{r+1} \dasharrow \Hyp_{r, n}
\end{equation}
taking $A = (A_0, \ldots, A_r)$ to the hypersurface $H_A$ given by~\eqref{e.presentation}.
Let $\DHyp_{r, n}$ be the closure of the image of $\phi_{r, n}$ in $\Hyp_{r, n}$.
This is the ``locus of symmetric determinantal hypersurfaces" of degree $n$ in $\bbP^r$.

\begin{proposition} \label{prop.prel}
Let $r,n$ be positive integers and $k$ be a field of characteristic $\neq 2$. Then the following assertions are equivalent.

\smallskip
(a) Given an $(r+1)$-tuple $A=(A_0, A_1,\dots,A_r)\in \Sym_n^{r+1}$ in general position, there are
only finitely many $(r+1)$-tuples $B \in \Sym_n^{r + 1}$ (up to equivalence) such that $H_A = H_B$.

\smallskip
(b) Given an $(r+1)$-tuple $A = (A_0, A_1, \ldots, A_r) \in \Sym_n^{r+1}$ in general position, there are only finitely 
many reduced $(r+1)$-tuples $B = (I, B_1, \ldots, B_r)$ such that $H_A = H_B$.

\smallskip
(c) $\dim(\DHyp_{r, n}) = (r - 1) \dfrac{n(n+1)}{2} + n$.
\end{proposition}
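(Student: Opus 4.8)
The plan is to recast all three conditions in terms of the action of $\GL_n$ on $X \colonequals \Sym_n^{r+1}$ by $g \cdot A = g^t A g$, whose orbits are exactly the equivalence classes, and then to read off (c) as the statement that the generic fibre of $\phi_{r,n}$ has the smallest dimension it possibly can. Throughout I would work on the dense open set $U_{r,n}$, on which $\phi_{r,n}$ restricts to an everywhere-defined dominant morphism $\phi \colon U_{r,n} \to \DHyp_{r,n}$ (since $A_0$ is nonsingular there, $P_A \neq 0$). The first step is to compute orbit dimensions on $U_{r,n}$: I claim that $\operatorname{Stab}_{\GL_n}(B)$ is finite for \emph{every} $B \in U_{r,n}$. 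By Lemma~\ref{lem.red}(a) the orbit of $B$ contains a reduced tuple $B' = (I, \diag(\lambda_1, \ldots, \lambda_n), \ldots)$ with the $\lambda_i$ distinct, and any $g$ fixing $B'$ must in particular stabilize $(I, \diag(\lambda_1, \ldots, \lambda_n))$; the computation in the proof of Lemma~\ref{lem.red}(b) (orthogonal and commuting with a diagonal matrix with distinct entries) forces $g \in \mu_2^n$. Since stabilizers along an orbit are conjugate, $\operatorname{Stab}_{\GL_n}(B)$ is finite, so every $\GL_n$-orbit meeting $U_{r,n}$ has dimension exactly $\dim \GL_n = n^2$.

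Next I would prove (a) $\Leftrightarrow$ (c). For $A \in U_{r,n}$, Lemma~\ref{lem.prel0}(a) gives $\phi^{-1}(H_A) \subseteq U_{r,n}$, so this fibre is a union of $\GL_n$-orbits each of dimension $n^2$. Such a union is finite if and only if it has dimension $n^2$: as it always contains the orbit of $A$ its dimension is at least $n^2$, and when the dimension equals $n^2$ each orbit is dense in a distinct irreducible component of the fibre, so there are at most as many orbits as components. Hence (a), which asserts that $\phi^{-1}(H_A)$ consists of finitely many orbits for general $A$, is equivalent to $\dim \phi^{-1}(H_A) = n^2$ for general $A$. By the fibre-dimension theorem applied to the dominant morphism $\phi \colon U_{r,n} \to \DHyp_{r,n}$, the generic fibre has dimension $\dim \Sym_n^{r+1} - \dim \DHyp_{r,n}$, so this last condition is equivalent to
\[ \dim \DHyp_{r,n} = \dim \Sym_n^{r+1} - n^2 = (r+1)\frac{n(n+1)}{2} - n^2 = (r-1)\frac{n(n+1)}{2} + n, \]
which is exactly (c).

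Finally I would deduce (a) $\Leftrightarrow$ (b) directly from the two lemmas of Section~\ref{sect2}. Fix a general $A$, so $A \in U_{r,n}$. If $B$ satisfies $H_B = H_A$ then $B \in U_{r,n}$ by Lemma~\ref{lem.prel0}(a), so by Lemma~\ref{lem.red} its equivalence class contains at least one reduced tuple and at most finitely many (a single $H_n$-orbit of them), while each reduced tuple determines its own equivalence class. Thus the assignment sending a reduced tuple $B$ with $H_B = H_A$ to its equivalence class is finite-to-one and surjective onto $\{\,\text{classes of } B : H_B = H_A\,\}$, so the set of such classes is finite if and only if the set of such reduced tuples is finite; that is, (a) and (b) coincide.

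I expect the main obstacle to lie in the orbit-dimension bookkeeping of the first step, specifically in verifying that \emph{every} tuple in $U_{r,n}$, and not merely a general one, has a finite stabilizer. This uniform finiteness is what guarantees that the entire fibre $\phi^{-1}(H_A)$ is swept out by $n^2$-dimensional orbits, which is precisely what allows the purely numerical condition (c) to be converted into the orbit-counting statement (a). The remaining ingredients — the fibre-dimension theorem and the elementary fact that an equidimensional union of orbits is finite exactly when it has that common dimension — are standard.
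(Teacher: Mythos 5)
Your proposal is correct, but it reaches condition (c) by a genuinely different route than the paper. The (a) $\Leftrightarrow$ (b) step is the same in both: Lemma~\ref{lem.prel0} plus Lemma~\ref{lem.red} show that equivalence classes of presentations of $H_A$ and reduced presentations of $H_A$ correspond to each other finite-to-one. The difference is in how the dimension count enters. The paper proves (b) $\Leftrightarrow$ (c) by restricting $\phi_{r,n}$ to the slice $Z_{r,n} = \{I\} \times D_n \times \Sym_n^{r-1}$ of reduced tuples: since $\dim Z_{r,n}$ is \emph{by construction} the number $(r-1)\frac{n(n+1)}{2}+n$, the Fiber Dimension Theorem immediately identifies ``finite general fiber over $Z_{r,n}$'' with (c), and no orbit geometry is needed beyond the finite group $H_n$. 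You instead prove (a) $\Leftrightarrow$ (c) directly, working with the $\GL_n$-action on all of $U_{r,n}$: you verify that every stabilizer on $U_{r,n}$ is finite (via the same normal-form computation as Lemma~\ref{lem.red}(b), plus conjugacy of stabilizers along an orbit), so each fiber is a union of $n^2$-dimensional orbits, and then use the standard facts that orbits are locally closed and that such an equidimensional union consists of finitely many orbits exactly when its dimension is $n^2$; the Fiber Dimension Theorem then converts this into $\dim \DHyp_{r,n} = \dim \Sym_n^{r+1} - n^2$, which is (c). Your route costs more machinery (finiteness of \emph{all} stabilizers on $U_{r,n}$, local closedness of orbits, the orbit-counting lemma), but it buys a conceptual explanation of the number in (c) as ``total space minus group dimension,'' whereas the paper's slice $Z_{r,n}$ makes the count tautological at the price of funneling everything through the reduced normal form. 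Both arguments gloss the same minor point in translating ``general $A \in U_{r,n}$'' into ``general point of $\DHyp_{r,n}$,'' which is handled by pulling back the dense open set supplied by the Fiber Dimension Theorem.
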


\begin{proof} 
(a) $\Longleftrightarrow$ (b): We may assume without loss of generality that $A \in U_{r, n}$. Fix $A$ and let $B \in \Sym_n^{r+1}$ vary over the symmetric
presentations of $H_A$. Every such $B$ lies in $U_{r, n}$ by Lemma~\ref{lem.prel0} and thus
is equivalent to at least one and at most finitely many reduced tuples by Lemma~\ref{lem.red}. 
In summary, for a given $A \in U_{r, n}$, there are finitely many presentations $B \in \Sym_n^{r+1}$ of $H_A$ up to equivalence 
if and only if there are finitely many reduced symmetric presentations.  

\smallskip
(b) $\Longleftrightarrow$ (c): Let 
$
Z_{r, n} = \{ I \} \times D_n \times \Sym_n^{r-1}
$
be the subvariety of reduced tuples in $\Sym_n^{r+1}$.
Here $D_n$ is the space of diagonal $n \times n$ matrices with distinct eigenvalues.
The rational map~\eqref{e.P} restricts to a morphism
\begin{equation} \label{e.restricted}
\phi_{r, n} \colon Z_{r, n} \longrightarrow \Hyp_{r, n} . 
\end{equation}
Note that $\phi_{r, n}(A) = \phi_{r, n}(B)$ whenever $A$ and $B$ are equivalent. By Lemma~\ref{lem.red}, up to equivalence every 
$A \in U_{r, n}$ can be represented by a $B \in Z_{r, n}$. Thus
\begin{equation} \label{e.image}
\DHyp_{r, n} = \overline{\phi_{r, n} (\Sym_n^{r+1})} = \overline{\phi_{r, n}(U_{r, n})} = \overline{\phi_{r, n} (Z_{r, n})}. 
\end{equation}
Here the horizontal bar denotes Zariski closure in $\Hyp_{r, n}$.

To finish the proof observe that (b) is equivalent to the general fiber of~\eqref{e.restricted} being finite. 
On the other hand in view of~\eqref{e.image}, (c) is equivalent to 
the image of~\eqref{e.restricted} being of dimension $(r - 1)\dfrac{n(n+1)}{2} + n$, i.e., of the same dimension as
$Z_{r, n}$. The equivalence between (b) and (c) now follows from the Fiber Dimension 
Theorem; see, e.g.,~\cite[Section I.6.3]{shafarevich}.
\end{proof}

\section{Proof of Theorem~\ref{thm.main}}
\label{sect4}

We will first prove the theorem for $r=2$. Our strategy here is to compute the dimension of $\DHyp_{2, n}$ and appeal to Proposition~\ref{prop.prel}.
It is well know that every smooth curve of degree $n \geqslant 1$ in $\bbP^2$ can be written in the form
$P_A(x_0, x_1, x_2) = 0$ for some triple $A = (A_0, A_1, A_2)$ of symmetric $n \times n$ matrices. This was first proved in~\cite{acdixon}; for a modern proof
and further references
see~\cite[Proposition 2(a)]{beauville} or~\cite[Theorem 1]{psv}. Thus $\DHyp_{2, n}$ contains a dense open subset of $\Hyp_{2, n} \simeq \bbP^{\frac{(n+2)(n+1)}{2} - 1}$.
We conclude that 
\[ \dim(\DHyp_{2, n}) = \frac{(n + 2)(n + 1)}{2} - 1 = \frac{n(n+1)}{2} + n . \]
This means that condition (c) of Proposition~\ref{prop.prel} holds for $r = 2$, and hence, so does condition (a). 

This completes the proof of Theorem~\ref{thm.main} for $r = 2$.
In view of Proposition~\ref{prop.prel} this case can be restated as follows. There exists a dense open subset $V$ of $\Sym_n^{3}$ such that
for any $(A_0, A_1, A_2) \in V$, there exist only finitely many reduced triples $(I, B_1, B_2) \in Z_{2, n}$ satisfying 
\begin{equation} \label{e.det3}  H_{(A_0, A_1, A_2)} = H_{(I, B_1, B_2)}.
\end{equation}
as polynomials in $x_0, x_1, x_2$.

Now suppose $r \geqslant 3$. We may assume without loss of generality that
\[ \text{$A = (A_0, A_1, \ldots, A_r) \in U_{r, n}$ and 
$(A_0, A_1, A_i) \in V$ for every $i = 2, 3, \ldots, r$.} \]
By Proposition~\ref{prop.prel} it suffices to show that
there are only finitely many reduced $(r+1)$-tuples
$B = (I, B_1, \ldots, B_r) \in Z_{r, n}$ such that $H_A = H_B$. In other words,
$\det(x_0 I + x_1 B_1  + \ldots + x_r B_r)$ is a non-zero scalar multiple of $\det(x_0 A_0 + x_1 A_1 + \ldots + x_r A_r)$.
Setting $x_3 = \ldots = x_r = 0$, we see that $\det(x_0 I + x_1B_1 + x_2 B_2)$ is a non-zero scalar multiple
of $\det(x_0 A_0 + x_1 A_1 + x_2 A_2)$, i.e.,~\eqref{e.det3} holds.
Remembering that $(A_0, A_1, A_2) \in V$, we conclude that 
for a fixed $A$ there are only finitely many possibilities for the reduced triple $(I, B_1, B_2)$. 
Similarly, for any $i = 3, 4, \ldots, r$,
since $(A_0, A_1, A_i) \in V$, there are only finitely many possibilities for the reduced triple $(I, B_1, B_i)$. In summary,
as $B$ ranges over the reduced symmetric presentations of $H_A$, there are only finitely many
possibilities for $B_1, B_2, \ldots, B_r$. This tells us that $H_A$ has only finitely many reduced symmetric presentations.
Theorem~\ref{thm.main} now follows from the equivalence of~(a) and (b) in Proposition~\ref{prop.prel}.
\qed

\begin{corollary} \label{cor.dimension} The dimension of the locus of symmetric determinantal varieties $\DHyp_{r, n}$ is $(r - 1)\dfrac{n(n+1)}{2} + n$
for any $r \geqslant 2$ and $n \geqslant 1$.
\end{corollary}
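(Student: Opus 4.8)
The plan is to deduce the corollary directly from Theorem~\ref{thm.main} and Proposition~\ref{prop.prel}. The key observation is that Theorem~\ref{thm.main}, now established for every $r \geqslant 2$ and $n \geqslant 1$ (over a field of characteristic $\neq 2$), is precisely the assertion of condition (a) in Proposition~\ref{prop.prel}: for $A \in \Sym_n^{r+1}$ in general position there are only finitely many symmetric presentations of $H_A$ up to equivalence. Since Proposition~\ref{prop.prel} establishes the equivalence of (a) with (c), and (c) is exactly the sought dimension formula $\dim(\DHyp_{r,n}) = (r-1)\frac{n(n+1)}{2} + n$, the corollary is immediate.

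Carrying this out, I would first cite Theorem~\ref{thm.main} to obtain condition (a), then invoke the implication (a) $\Rightarrow$ (c) from Proposition~\ref{prop.prel}. It is worth recalling where this implication comes from: by~\eqref{e.image} the variety $\DHyp_{r,n}$ is the Zariski closure of the image of the morphism $\phi_{r,n} \colon Z_{r,n} \to \Hyp_{r,n}$, and by the Fiber Dimension Theorem the finiteness of the general fiber forces this image to have the same dimension as its source $Z_{r,n} = \{I\} \times D_n \times \Sym_n^{r-1}$. A quick count gives $\dim Z_{r,n} = \dim D_n + (r-1)\dim \Sym_n = n + (r-1)\frac{n(n+1)}{2}$, matching the claimed value.

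There is no genuine obstacle at this stage, since all the substance resides in the already-completed proof of Theorem~\ref{thm.main}; the corollary simply re-expresses that finiteness result as a statement about $\dim \DHyp_{r,n}$. The only item meriting attention is the routine verification that the exponent appearing in condition (c) agrees with $\dim Z_{r,n}$, which the display above confirms.
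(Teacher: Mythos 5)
Your proposal is correct and follows exactly the paper's own route: the proof of Theorem~\ref{thm.main} establishes condition (a) of Proposition~\ref{prop.prel}, and the equivalence (a) $\Leftrightarrow$ (c) immediately yields the dimension formula. The only addition is your explicit verification that $\dim Z_{r,n} = n + (r-1)\frac{n(n+1)}{2}$, which is a harmless (and already implicit) bookkeeping step from the proof of Proposition~\ref{prop.prel}.
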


\begin{proof} This follows from Proposition~\ref{prop.prel}. We have proved that condition (a) holds, and hence so does condition (c). 
\end{proof}

\section{The number of symmetric presentations}
\label{sect5}

Theorem~\ref{thm.main} asserts that the number of inequivalent symmetric presentations of a general element of $\DHyp_{r, n}$ is finite.
Denote this number by $\mu(r, n)$. It is clear from the definition that $\mu(r, 1) = 1$ for any $r \geqslant 1$. It is also known that
\[ \mu(2, n) = \begin{cases} 
2^{\frac{(n-1)(n-2)}{2}} (2^{\frac{(n-1)(n-2)}{2}} + 1) - 1, \; \; \text{if $n \geqslant 11$ and $n \equiv \pm 3 \pmod{8}$} \\
2^{\frac{(n-1)(n-2)}{2}} (2^{\frac{(n-1)(n-2)}{2}} + 1), \; \; \text{otherwise};
\end{cases}
\]
see \cite[Theorem 1]{psv}. In this sections we will prove the following

\begin{proposition}\label{prop.mu}
%
If $r \geqslant \dfrac{n(n+1)}{2} - 1$, then $\mu(r, n) = 1$ for every $n \geqslant 2$. 
\end{proposition}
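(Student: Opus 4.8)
The plan is to prove the stronger statement that for $A=(A_0,\dots,A_r)\in\Sym_n^{r+1}$ in general position \emph{every} symmetric presentation $B=(B_0,\dots,B_r)$ of $H_A$ is equivalent to $A$; this gives $\mu(r,n)=1$ at once. The hypothesis enters in exactly one way. Writing $N=\dim\Sym_n=\frac{n(n+1)}{2}$, the bound $r\geqslant N-1$ means $r+1\geqslant N$, so for $A$ in general position the matrices $A_0,\dots,A_r$ span $\Sym_n$; after relabelling I may assume $A_0,\dots,A_{N-1}$ form a $k$-basis of $\Sym_n$. Throughout I will use the following remark, valid since $\mathrm{char}\,k\neq 2$: if $W\in\Sym_n$ satisfies $\det(S+W)=\det(S)$ for all $S\in\Sym_n$, then $W=0$. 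Indeed, applying the hypothesis to $sS$ gives $\det(sS+W)=s^n\det(S)$, and comparing the coefficient of $s^{n-1}$ yields $\tr(\mathrm{adj}(S)\,W)=0$ for all $S$; letting $S$ range over the invertible matrices shows $\tr(VW)=0$ for all $V\in\Sym_n$, and non-degeneracy of the trace form on $\Sym_n$ forces $W=0$.

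First I would settle the case $r=N-1$, where $A_0,\dots,A_r$ is a basis. Given $B$ with $H_A=H_B$, i.e. $\det(\sum_i x_iA_i)=c\,\det(\sum_i x_iB_i)$ for some $c\in k^\times$, define a linear map $\psi\colon\Sym_n\to\Sym_n$ by $\psi(A_i)=B_i$. Since $S=\sum_i x_iA_i$ ranges over all of $\Sym_n$, the identity reads $\det(\psi(S))=c^{-1}\det(S)$ for every $S$. If $\psi(S_0)=0$ then $\det(\psi(S+S_0))=\det(\psi(S))$, so $\det(S+S_0)=\det(S)$ for all $S$ and the remark gives $S_0=0$; thus $\psi$ is bijective. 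At this point I would invoke the classical linear-preserver theorem: for $n\geqslant 2$ and $\mathrm{char}\,k\neq 2$, a determinant-preserving linear bijection of $\Sym_n$ is a congruence $S\mapsto g^tSg$. Absorbing the scalar $c^{-1}$ by extracting suitable roots (possible as $k=\bar k$) produces $h\in\GL_n$ with $\psi(S)=h^tSh$, so $B_i=h^tA_ih$ and $B$ is equivalent to $A$.

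Next I would treat general $r\geqslant N-1$ by bootstrapping from the basis case. Restricting the identity $H_A=H_B$ to $x_N=\dots=x_r=0$ yields $\det(\sum_{i<N}x_iA_i)=c\,\det(\sum_{i<N}x_iB_i)$, so the previous paragraph supplies $h\in\GL_n$ with $B_i=h^tA_ih$ for all $i<N$. For $j\geqslant N$, write $A_j=\sum_{i<N}c_{ji}A_i$ and put $B_j'=\sum_{i<N}c_{ji}B_i=h^tA_jh$; it remains to see $B_j=B_j'$. Setting $y_i=x_i+\sum_{j\geqslant N}x_jc_{ji}$ and using the basis-case identity $\det(\sum_{i<N}y_iA_i)=c\,\det(\sum_{i<N}y_iB_i)$, the full identity $H_A=H_B$ collapses to
\[ \det\Bigl(\sum_{i<N}x_iB_i+\sum_{j\geqslant N}x_jB_j'\Bigr)=\det\Bigl(\sum_{i<N}x_iB_i+\sum_{j\geqslant N}x_jB_j\Bigr) \]
for all $x$. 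Since $B_0,\dots,B_{N-1}$ is again a basis of $\Sym_n$ (being the $h$-congruence of $A_0,\dots,A_{N-1}$), the sum $\sum_{i<N}x_iB_i$ sweeps out $\Sym_n$, and applying the remark for each fixed $(x_j)_{j\geqslant N}$ forces $\sum_{j\geqslant N}x_jB_j=\sum_{j\geqslant N}x_jB_j'$, hence $B_j=B_j'=h^tA_jh$. Therefore $B=h^tAh$ is equivalent to $A$, which proves $\mu(r,n)=1$.

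The one genuinely external ingredient, and the step I expect to be the main obstacle, is the linear-preserver theorem asserting that every determinant-preserving linear bijection of $\Sym_n$ is a congruence. I would either cite it from the linear-preserver literature or, for a self-contained account, deduce it from the fact that such a map must carry the rank-one locus $\{vv^t\}$ to itself and respect its incidence with the lower-rank strata, which pins the map down up to congruence; the case $n=2$ is simply the statement that $\Orth_3$ acts on $\Sym_2$ through congruences (consistent with $\mu(2,2)=1$). Every remaining step is linear algebra, with $\mathrm{char}\,k\neq 2$ used only through the non-degeneracy of the trace form on $\Sym_n$.
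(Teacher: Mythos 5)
Your proof is correct and takes essentially the same route as the paper: the basis case is settled by the identical appeal to the linear-preserver theorem for determinant-preserving linear maps on $\Sym_n$ (the paper cites Bermudez--Garibaldi--Larsen and Cao--Tang, with the same scalar-absorption trick over $k=\bar{k}$), and the remaining matrices are then forced by what is mathematically the same argument---your translation-invariance remark, proved by extracting the $\tr(\mathrm{adj}(S)W)$ coefficient and using Zariski density of invertible symmetric matrices plus non-degeneracy of the trace form, is exactly the paper's claim that $\tr(CA_{r+1})=\tr(CB_{r+1})$ for all invertible symmetric $C$. The only difference is organizational: the paper inducts on $r$, absorbing one extra matrix per step via a contradiction argument, whereas you bootstrap directly from the basis case and pin down all the extra matrices simultaneously.
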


\begin{proof}
First assume $r = \dfrac{n(n+1)}{2} - 1$.  Choose $A = (A_0, A_1, \ldots, A_r) \in \Sym_n^{r+1}$ such that
$A_0, \ldots, A_r$ are linearly independent over $k$, i.e., $A_0, \ldots, A_r$ form a basis for $\Sym_n$ as a $k$-vector space. 
Suppose $H_A = H_B$ for some $B = (B_0, B_1, \ldots, B_r) \in \Sym_n$. This means that $P_A$ is a non-zero scalar multiple of $P_B$. 
After replacing $B$ by $g^t B g$ for a suitably chosen scalar matrix $g$, we may assume that $P_A = P_B$.
Our goal is to show that $B$ is equivalent to $A$.

Let $T:\Sym_n\rightarrow \Sym_n$ be the linear transformation sending $A_i$ to $B_i$ for each $1\leqslant i \leqslant r$. 
The condition that $P_B=P_A$ translates to $\det(T(X))=\det(X)$ for every symmetric matrix $X \in \Sym_n$. Any such linear transformation
is of the form $T(A)= g^t A g$; see~\cite[Corollary 8.6]{bgl} or~\cite[Theorem 1.1]{ct}\footnote{\cite[Corollary 8.6]{bgl} 
and \cite[Theorem 1.1]{ct} asserts that $T(A)=\alpha g^t Ag$
for some $\alpha \in k$ and $g \in\GL_n$ such that $\det(\alpha g^2)=1$. 
However, since we are assuming that our base field $k$ is algebraically closed,
we can absorb $\alpha$ by into $g$ by replacing $g$ with $\dfrac{1}{\sqrt{\alpha}} g$.}.
This shows that $A$ and $B$ are equivalent, as desired.

We will show that $\mu(r, n) = 1$ for any $r \geqslant \dfrac{n(n+1)}{2} - 1$ by induction on $r$. The base case,
where $r = \dfrac{n(n+1)}{2} - 1$, was settled in the previous paragraph. 
For the induction step, let us assume that $\mu(r, n) = 1$ for some $r \geqslant \dfrac{n(n+1)}{2} - 1$. Our goal is to
show that $\mu(r+1, n) = 1$. We will argue by contradiction. Assume that $\mu(r + 1, n) \geqslant 2$.
Then for $A = (A_0, \ldots, A_r, A_{r+1})$ in general position in $\Sym_n^{r+2}$, there exists a $B = (B_0, \ldots, B_r, B_{r+1}) \in \Sym_n^{r+2}$ 
such that $B$ is not equivalent to $A$ and $H_A = H_B$. Once again, after replacing $B$ by a scalar multiple,
we may assume that $P_A = P_B$. (Since $k$ is an algebraically closed field, any scalar multiple of $B$ is equivalent to $B$.)
In other words, 
\begin{equation} \label{e.mu} \det(x_0 A_0 + \ldots + x_r A_r + x_{r+1} A_{r + 1}) =  
\det (x_0 B_0 + \ldots + x_r B_r + x_{r+1} B_{r+1}). 
\end{equation}
Setting $x_{r+1} = 0$, we see that
$P_{\overline{A}} = P_{\overline{B}}$, and thus $H_{\overline{A}} = H_{\overline{B}}$,
where \[ \text{$\overline{A} = (A_0, A_1, \ldots, A_r)$ and $\overline{B} = (B_0, B_1, \ldots, B_r)$.} \]
Since $A$ is in general position in $\Sym_n^{r+2}$, $\overline{A}$ is in general position on $\Sym_n^{r+1}$. By our assumption, $\mu(r, n) = 1$; hence,
$\overline{A}$ and $\overline{B}$ are equivalent, i.e., $\overline{A} = g^t \overline{B} g$ for some $g \in \GL_n$.
After replacing $B$ by $g^t B g$, we may assume that $B_0 = A_0, \ldots, B_r = A_r$. In other words,
$A = (A_0, \ldots, A_r, A_{r+1})$, $B = (A_0, \ldots, A_r, B_{r + 1})$ and $B$ is not equivalent to $A$. 

We claim that this is not possible. More specifically, we claim that in this situation our assumption that $P_A = P_B$ forces $B_{r+1}$ to be equal 
to $A_{r+1}$. Since $r \geqslant \dfrac{n(n+1)}{2} - 1$ and $A$ is in general position in $\Sym_n^{r+1}$, we may assume without loss of generality that 
$A_0, \ldots, A_r$ span $\Sym_n$ as a $k$-vector space. 

Now recall that the symmetric bilinear form $(X, Y) \mapsto \tr(XY)$ (otherwise known as the trace form) is non-degenerate on 
$\Sym_n$. Thus in order to prove the claim, it suffices to show that
\begin{equation} \label{e.C} \text{$\tr(C A_{r+1}) = \tr(C B_{r+1})$ for every $C \in \Sym_n$.}
\end{equation}
Since non-singular matrices are Zariski dense in $\Sym_n$, we only need to prove~\eqref{e.C} for every non-singular symmetric matrix $C \in \Sym_n$.
To establish~\eqref{e.C} for a non-singular matrix 
$C \in \Sym_n$, note that $C^{-1}$ is also symmetric and thus can be written as a linear combination 
\[ C^{-1} = c_0 A_0 + \ldots + c_r A_r \]
for suitable scalars $c_0, c_1, \ldots, c_r \in k$.
Substituting $x_i = \lambda c_i$ for $i = 0, \ldots, r$ and $x_{r+1} = -1$ into~\eqref{e.mu}, where $\lambda$ is a variable, 
and remembering that $A_i = B_i$ for $i = 0, 1, \ldots, r$, we see that
\[ \det(\lambda C^{-1}  - A_{r + 1}) = \det (\lambda C^{-1} -  B_{r+1}).  \]
Multiplying both sides by $\det(C)$, we conclude that $CA_{r+1}$ and $C B_{r+1}$ have the same characteristic polynomial. Hence, they have the same trace,
$\tr(CA_{r+1}) = \tr(C B_{r+1})$. This completes the proof of the claim and thus of Proposition~\ref{prop.mu}.
\end{proof}

\begin{remark} Using Lemma~\ref{lem.red}(b), one can show that 
\[ \mu(r, n) =   \frac{\deg(\phi_{r, n})}{2^{n-1} n!}, \]
where $\deg(\phi_{r, n})$ denotes the separable degree of the morphism $\phi_{r, n} \colon Z_{r, n} \longrightarrow \DHyp_{r, n}$ in~\eqref{e.restricted}.
The denominator $2^{n-1} n!$ is half the size of the group $H_n$. The reason for the ``half" is that the kernel of the $H_n$-action on $Z_{r, n}$
is the subgroup $\{ \pm I \}$ of order $2$. Thus for general $A \in \Sym_n^{r+1}$ the number of reduced $(r+1)$-tuples equivalent to $A$ is
$|H_n|/2$.
\end{remark}

\begin{remark} The description of the linear transformation $T \colon \Sym_n \to \Sym_n$ preserving the determinant which was 
used in the proof of Proposition~\ref{prop.mu} is a symmetric version of a classical 
theorem of Frobenius~\cite{frobenius}. For a detailed discussion of this and related ``preserver problems" 
we refer the reader to~\cite{bgl}.
\end{remark}

\begin{remark} We do not know what the value of $\mu(r, n)$ is for $3 \leqslant r \leqslant \dfrac{n(n+1)}{2} - 2$.
\end{remark}

\begin{remark}  A variant of Theorems~\ref{thm.rv} and~\ref{thm.main} for ``skew-symmetric" matrices
is also of interest. Here ``determinant" should be replaced by ``Pfaffian". We leave it as an open problem for the reader.
\end{remark}

\section*{Acknowledgments} We are grateful to Boris Reichstein whose questions led to both~\cite{rv} and the present paper and to
Giorgio Ottaviani for his interest in Theorem~\ref{thm.main} and helpful comments.

\bibliographystyle{abbrv}

\end{document}